\documentclass[11pt,reqno]{amsart}
\usepackage{amssymb}
\usepackage{amsmath}
\usepackage[active]{srcltx}
\usepackage{t1enc}
\usepackage[latin2]{inputenc}
\usepackage{verbatim}
\usepackage{amsmath,amsfonts,amssymb,amsthm}
\usepackage[mathcal]{eucal}
\usepackage{enumerate}
\usepackage[centertags]{amsmath}
\usepackage{graphics}

\setcounter{MaxMatrixCols}{10}
\newtheorem{theorem}{Theorem}

\newtheorem{lemma}{Lemma}

\newtheorem{definition}{Definition}

\begin{document}
	\author{V. Tsagareishvili and G. Tutberidze}
	\title[Some problems of convergence of general Fourier series]{Some problems of convergence of general Fourier series}
	\address{ Associate Professor V. Tsagareishvili, Department of Mathematics, Faculty of Exact and Natural Sciences, Ivane Javakhishvili Tbilisi State University, Chavchavadze str. 1, Tbilisi 0128, Georgia }
	\email{cagare@ymail.com}
	\address{G.Tutberidze, The University of Georgia, School of science and technology, 77a Merab Kostava St, Tbilisi 0128, Georgia}
	\email{g.tutberidze@ug.edu.ge \and giorgi.tutberidze1991@gmail.com}
	
	\thanks{The research was supported by Shota Rustaveli National Science Foundation grant  no. FR-19-676.}
	\date{}
	\maketitle

	\begin{abstract}
		S. Banach \cite{Banach} proved that good differential properties of  function do not guarantee the a.e. convergence of the Fourier series of this function with respect to general orthonormal systems (ONS). On the other hand it is very well known that a sufficient condition for the a.e. convergence of an orthonormal series is given by the Menshov-Rademacher Theorem.
		
		The paper deals with sequence of positive numbers $(d_n)$ such that multiplying the Fourier coefficients $(C_n(f))$ of functions with bounded variation by these numbers one obtains a.e. convergent series of the form $\sum_{n=1}^{\infty}d_n C_n(f) \varphi_n (x).$ It is established that the resulting conditions are best possible.
	\end{abstract}
	
	\textbf{2010 Mathematics Subject Classification.} 42C10, 46B07
	
	\textbf{Key words and phrases:} Fourier coefficients, Fourier series, a.e. convergence, Orthonormal series.

\section{SOME NOTATIONS AND THEOREMS}
Let $\left(\varphi_n\right)$ be an orthonormal system (ONS) on $\left[0,1\right]$ and
\begin{eqnarray}
	\label{1.1} C_n(f)=\int_0^1 f(x) \varphi_n\left(x\right)\,dx, \quad n=1,2 
\end{eqnarray}
be the Fourier coefficients of a function $f\in L_2 \left(0,1\right).$

We denote by $V\left(0,1\right)$ the class of all functions of bounded variation and write $V \left(f\right)$ for the total variation of a function $f$ on $\left[0,1\right]$.

Let $A$ be the class of all absolutely continuous functions $f$ on $[0, 1]$. This is a Banach space with the norm
$$\left\Vert f \right\Vert _A = \int_{0}^{1} \left|f^{'} \left(x\right)\right|dx+\left \Vert f\left(x\right) \right \Vert_C,$$
where $C(0, 1)$ is the class of all continuous functions $f$ on $[0, 1];$ $\left \Vert f(x) \right \Vert_C$ is the norm of $f$ on $C(0, 1)$.

\begin{definition} \label{definition1.1}
	A positive bounded sequence of numbers $\left(d_n\right)$ is called a multiplier of convergence with respect to a function class $E$ if 
	\begin{eqnarray}
		\qquad \sum_{k=1}^{\infty} d_k C_k\left(f\right) \varphi_k\left(x\right), \qquad \left(C_k\left(f\right)=\int_{0}^{1} f\left(x\right)\varphi_k\left(x\right)dx\right)	\notag
	\end{eqnarray}
	is convergence a.e. for all $ f\in E.$
\end{definition}

\begin{theorem} \label{theorem1.1} (see \cite{KashinSaakyan} ch.9.p332)(Menshov-Rademacher).
	If $\left(\varphi_n\right)$ is an ONS on $\left[0,1\right]$ and a number sequence $\left(c_n\right)$ satisfies the condition
	\begin{eqnarray}
		\sum_{n=1}^{\infty}c_n^2 \log_2^2 n < +\infty, \notag
	\end{eqnarray}
	then the series 
	\begin{eqnarray} \notag
		\sum_{n=1}^{\infty}c_n \varphi_n\left(x\right)
	\end{eqnarray}
	converges a.e. on $\left[0,1\right]$.
\end{theorem}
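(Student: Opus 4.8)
The plan is to deduce the theorem from the \emph{Rademacher--Menshov maximal inequality}: for every finite orthonormal family $\psi_1,\dots,\psi_N$ on $[0,1]$ and all scalars $b_1,\dots,b_N$,
\begin{equation}\label{eq:RMmax}
	\int_0^1\Big(\max_{1\le k\le N}\Big|\sum_{j=1}^{k}b_j\psi_j(x)\Big|\Big)^{2}dx\;\le\;(\log_2 N+2)^2\sum_{j=1}^{N}b_j^2 .
\end{equation}
I would prove \eqref{eq:RMmax} by the classical dyadic decomposition of partial sums. Adding zero coefficients if necessary, we may assume $N=2^m$. For each $k\le 2^m$ the binary expansion of $k$ exhibits $\{1,\dots,k\}$ as a disjoint union of at most $m+1$ dyadic blocks $I=\{(q-1)2^{r}+1,\dots,q2^{r}\}$, with at most one block of each length $2^{r}$, $0\le r\le m$. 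Putting $\Delta_I=\sum_{j\in I}b_j\psi_j$, this gives the pointwise estimate $\max_{k\le 2^m}\big|\sum_{j\le k}b_j\psi_j\big|\le\sum_{r=0}^{m}\max_{|I|=2^{r}}|\Delta_I|$. For fixed $r$ the $2^{m-r}$ dyadic blocks of length $2^{r}$ are pairwise disjoint, so $\max_{|I|=2^{r}}|\Delta_I(x)|^2\le\sum_{|I|=2^{r}}|\Delta_I(x)|^2$, and integration together with orthonormality yields $\big\|\max_{|I|=2^{r}}|\Delta_I|\big\|_2^2\le\sum_{|I|=2^{r}}\|\Delta_I\|_2^2=\sum_{j=1}^{2^m}b_j^2$. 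Minkowski's inequality in $L_2$ then gives \eqref{eq:RMmax}. Proving \eqref{eq:RMmax} is the only genuinely delicate point; everything that follows is soft.

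Next I would break the series into dyadic blocks. For $\nu\ge 0$ set
\[ g_\nu(x)=\max_{2^{\nu}<k\le 2^{\nu+1}}\Big|\sum_{j=2^{\nu}+1}^{k}c_j\varphi_j(x)\Big|,\qquad \gamma_\nu=\Big(\sum_{j=2^{\nu}+1}^{2^{\nu+1}}c_j^2\Big)^{1/2}. \]
Applying \eqref{eq:RMmax} to the orthonormal family $\{\varphi_j:2^{\nu}<j\le 2^{\nu+1}\}$, which has $2^{\nu}$ members, gives $\|g_\nu\|_2^2\le(\nu+2)^2\gamma_\nu^2$. Since $\log_2 j>\nu$ for every $j$ with $2^{\nu}<j\le 2^{\nu+1}$, we get $(\nu+2)^2\gamma_\nu^2\le C\sum_{2^{\nu}<j\le 2^{\nu+1}}c_j^2\log_2^2 j$ with an absolute constant $C$ (the finitely many small $\nu$ being harmless), so the hypothesis $\sum_{n}c_n^2\log_2^2 n<\infty$ forces $\sum_{\nu\ge 0}\|g_\nu\|_2^2<\infty$. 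By the Beppo Levi theorem $\sum_{\nu\ge 0}g_\nu(x)^2<\infty$, hence $g_\nu(x)\to 0$, for a.e.\ $x$.

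It remains to treat the lacunary partial sums $S_{2^{\nu}}=\sum_{j=1}^{2^{\nu}}c_j\varphi_j$. For $\nu\ge 1$ one has $\|S_{2^{\nu+1}}-S_{2^{\nu}}\|_2=\gamma_\nu$, while $\nu^2\gamma_\nu^2\le\sum_{2^{\nu}<j\le 2^{\nu+1}}c_j^2\log_2^2 j$; hence, by Cauchy--Schwarz, $\sum_{\nu\ge 1}\gamma_\nu\le\big(\sum_{\nu\ge 1}\nu^{-2}\big)^{1/2}\big(\sum_{\nu\ge 1}\nu^2\gamma_\nu^2\big)^{1/2}<\infty$. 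Minkowski's inequality together with monotone convergence then gives $\big\|\sum_{\nu\ge 1}|S_{2^{\nu+1}}-S_{2^{\nu}}|\big\|_2\le\sum_{\nu\ge 1}\gamma_\nu<\infty$, so $\sum_{\nu\ge 1}|S_{2^{\nu+1}}(x)-S_{2^{\nu}}(x)|<\infty$ for a.e.\ $x$; in particular the sequence $\big(S_{2^{\nu}}(x)\big)_{\nu}$ converges for a.e.\ $x$, say to $S(x)$. Finally, for an arbitrary $N$ pick $\nu$ with $2^{\nu}<N\le 2^{\nu+1}$; then
\[ |S_N(x)-S(x)|\le\Big|\sum_{j=2^{\nu}+1}^{N}c_j\varphi_j(x)\Big|+|S_{2^{\nu}}(x)-S(x)|\le g_\nu(x)+|S_{2^{\nu}}(x)-S(x)|, \]
and the right-hand side tends to $0$ for a.e.\ $x$ as $N\to\infty$ (hence $\nu\to\infty$). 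This establishes the a.e.\ convergence of $\sum_{n=1}^{\infty}c_n\varphi_n$, as asserted.
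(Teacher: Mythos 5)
Your proof is correct and complete: the Rademacher--Menshov maximal inequality via the dyadic decomposition of partial sums, followed by the block functions $g_\nu$, the a.e.\ convergence of the lacunary subsequence $S_{2^\nu}$, and the interpolation between lacunary indices is exactly the classical argument. The paper itself gives no proof of this statement --- it quotes the Menshov--Rademacher theorem as a known result from Kashin and Saakyan --- and your argument is essentially the standard one found in that reference, so there is nothing to compare beyond noting that it is the canonical proof.
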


\begin{lemma}[see \cite{GogoladzeTsagareishvili}]\label{lemma1.1} If $f \in L_2 \left(0,1\right)$ takes only finite values on $\left[0,1\right]$ and $g \in L_2 \left(0,1\right)$  is an arbitrary function, then
	
	\begin{eqnarray} \label{1.2}
		\int_{0}^{1} f\left(x\right) g \left(x\right) dx 
		 &=& \sum_{i=1}^{n-1}\left(f\left(\frac{i}{n}\right)-f\left(\frac{i+1}{n}\right)\right)\int_{0}^{i/n}g\left(x\right)dx \\
		&+& \sum_{i=1}^{n}\int_{\left(i-1\right)/n}^{i/n}\left(f\left(x\right) -f\left(\frac{i}{n}\right)\right)g\left(x\right)dx   \notag \\
		&+&f\left(1\right)\int_{0}^{1}g\left(x\right)dx.     \notag
	\end{eqnarray}
\end{lemma}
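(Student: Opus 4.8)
The plan is to begin from the trivial decomposition of the integral over the uniform partition $\{i/n\}_{i=0}^{n}$ of $[0,1]$,
\[
\int_{0}^{1} f(x)g(x)\,dx = \sum_{i=1}^{n} \int_{(i-1)/n}^{i/n} f(x)g(x)\,dx,
\]
and on each subinterval to write $f(x) = f\!\left(\tfrac{i}{n}\right) + \bigl(f(x) - f\!\left(\tfrac{i}{n}\right)\bigr)$. This at once splits the left-hand side as
\[
\sum_{i=1}^{n} f\!\left(\tfrac{i}{n}\right)\int_{(i-1)/n}^{i/n} g(x)\,dx \;+\; \sum_{i=1}^{n} \int_{(i-1)/n}^{i/n}\Bigl(f(x) - f\!\left(\tfrac{i}{n}\right)\Bigr) g(x)\,dx,
\]
whose second summand is already exactly the middle term of \eqref{1.2}. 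Here the hypothesis that $f$ takes only finite values is used solely to guarantee that every $f(i/n)$ is a genuine real number, so that this pointwise algebra is legitimate.

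It then remains to transform the first sum $S := \sum_{i=1}^{n} f\!\left(\tfrac{i}{n}\right)\bigl(G_i - G_{i-1}\bigr)$, where I set $G_i := \int_{0}^{i/n} g(x)\,dx$, so that $G_0 = 0$ and $G_n = \int_{0}^{1} g(x)\,dx$. I would apply Abel's summation by parts: shifting the index in $\sum_i f(i/n)\,G_{i-1}$ and discarding the vanishing $i=0$ term gives
\[
S = \sum_{i=1}^{n} f\!\left(\tfrac{i}{n}\right) G_i - \sum_{i=1}^{n-1} f\!\left(\tfrac{i+1}{n}\right) G_i = \sum_{i=1}^{n-1}\left(f\!\left(\tfrac{i}{n}\right) - f\!\left(\tfrac{i+1}{n}\right)\right) G_i + f(1)\,G_n.
\]
Reinserting the definitions of $G_i$ and $G_n$ turns this into precisely the first and third terms of \eqref{1.2}; combined with the middle term isolated above, the identity follows.

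The argument is entirely elementary and I do not anticipate a genuine obstacle. The only point deserving care is the bookkeeping in the summation by parts — correctly handling the index shift $i \mapsto i+1$ and absorbing the two boundary contributions, $G_0 = 0$ and the term $f(1)\int_0^1 g$. Since $f, g \in L_2(0,1)$ and the partition is finite, all the integrals and finite sums appearing above are well defined, so no question of convergence or integrability arises.
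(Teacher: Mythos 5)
Your argument is correct: the decomposition over the uniform partition, the pointwise splitting $f(x)=f(i/n)+(f(x)-f(i/n))$, and the Abel summation by parts with the boundary terms $G_0=0$ and $f(1)G_n$ all check out and yield exactly the three terms of \eqref{1.2}. The paper itself does not prove this lemma (it is quoted from \cite{GogoladzeTsagareishvili}), but your summation-by-parts derivation is the standard argument for this identity, so there is nothing to flag.
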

We have that $(\log n := \log_2 n)$
\begin{eqnarray} \label{1.3} 
	\sum_{k=1}^{\infty}d_k^2 C_k^2\left(f\right) \log^2 k &=&\sum_{k=1}^{\infty}d_k^2 C_k\left(f\right) C_k\left(f\right) \log^2 k \\
	&=&\int_{0}^{1}f\left(x\right)\sum_{k=1}^{n}d_k^2 C_k\left(f\right) \log^2 k \varphi_k\left(x\right)dx \notag \\
	&=& \int_{0}^{1} f\left(x\right) P_n\left(d,c,x\right)dx,  \notag 
\end{eqnarray}
 where $c=(C_n(f)), \ \ a=(a_n)$ and
$$P_n\left(d,a,x\right)=\sum_{k=1}^{n}d_k^2 a_k \log^2 k \varphi_k\left(x\right).$$
Set
\begin{eqnarray} \label{1.4}
	G_n\left(d,a\right)=\max_{1\leq i \leq n} \left\vert \int_{0}^{i/n} P_n\left(d,a,x\right)dx\right\vert
\end{eqnarray}
and
\begin{equation} \label{1.5}
	T_n\left(d,a\right)=\left(\sum_{k=1}^{n} d_k^2 a_k^2 \log^2 k\right)^{1/2},
\end{equation}
where $\left(a_n\right)\in l_2.$

\begin{lemma}\label{lemma1.2}
Let $(d_n )$  be a positive, bounded sequence of numbers.	Then  for every $i, \left(i=1,2, \dots, n \right)$
	\begin{equation*}
		\int_{\left(i-1\right)/n}^{i/n}\left\vert P_n\left(d,a,x\right)\right \vert dx =O \left(1\right) T_n \left(d,a\right).
	\end{equation*}
\end{lemma}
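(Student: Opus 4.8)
The plan is to bound the $L_1$-norm of $P_n(d,a,\cdot)$ over the short interval $[(i-1)/n,i/n]$ by the $L_2$-norm of $P_n(d,a,\cdot)$ over all of $[0,1]$, then compute that $L_2$-norm by orthonormality, and finally absorb the resulting logarithmic factors using that $(d_n)$ is bounded and that $\log^2 n$ grows slower than $n$.

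First I would apply the Cauchy--Schwarz inequality on the interval $[(i-1)/n,i/n]$, whose length is $1/n$:
$$\int_{(i-1)/n}^{i/n}\bigl|P_n(d,a,x)\bigr|\,dx \le n^{-1/2}\Bigl(\int_{(i-1)/n}^{i/n}P_n^2(d,a,x)\,dx\Bigr)^{1/2} \le n^{-1/2}\Bigl(\int_0^1 P_n^2(d,a,x)\,dx\Bigr)^{1/2}.$$
Note that this bound is already uniform in $i$, as required. Since $(\varphi_k)$ is an ONS on $[0,1]$, the last integral equals $\sum_{k=1}^n d_k^4 a_k^2\log^4 k$.

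Next I would estimate this sum. Write $M:=\sup_k d_k<\infty$, so that $d_k^4\le M^2 d_k^2$; moreover, for $1\le k\le n$ we have $\log^2 k\le\log^2 n$, and $\log^2 n\le C_0\,n$ for an absolute constant $C_0$ (because $\log^2 t/t$ is bounded on $[1,\infty)$). Hence
$$\sum_{k=1}^n d_k^4 a_k^2\log^4 k \le M^2\log^2 n\sum_{k=1}^n d_k^2 a_k^2\log^2 k \le M^2 C_0\,n\,T_n^2(d,a).$$
Combining this with the previous display yields
$$\int_{(i-1)/n}^{i/n}\bigl|P_n(d,a,x)\bigr|\,dx \le n^{-1/2}\bigl(M^2 C_0\,n\bigr)^{1/2}\,T_n(d,a) = M\sqrt{C_0}\;T_n(d,a),$$
which is precisely the asserted estimate $O(1)\,T_n(d,a)$, with a constant depending only on the bound of the sequence $(d_n)$.

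The only mildly delicate point — the one I would flag as the crux — is the step matching the $\log^4 k$ produced by Parseval's identity against the $\log^2 k$ appearing in $T_n(d,a)$: this mismatch is exactly compensated by the factor $n^{-1/2}$ coming from integrating over an interval of length $1/n$, via the elementary inequality $\log^2 n=O(n)$. Everything else is routine.
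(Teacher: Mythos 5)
Your proof is correct and follows essentially the same route as the paper's: Cauchy--Schwarz over the interval of length $1/n$, Parseval via orthonormality to get $\sum_k d_k^4 a_k^2\log^4 k$, and then the bound $D\log n/\sqrt n=O(1)$ to reduce to $T_n(d,a)$. You merely make explicit the elementary inequality $\log^2 n=O(n)$ that the paper leaves implicit.
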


\begin{proof}
	If we use the Cauchy inequality and we  mean that $$D=\sup_{k} d_k$$
	we get 
	\begin{eqnarray}
		\int_{\left(i-1\right)/n}^{i/n}\left\vert P_n\left(d,a,x\right)\right \vert dx &\leq& \frac{1}{\sqrt{n}}\left(\int_{0}^{1}P_n^2 \left(d,a,x\right)dx\right)^{1/2}	\notag \\
		&=& \frac{1}{\sqrt{n}} \left(\int_{0}^{1}\left(\sum_{k=1}^{n} d_k^2 a_k \log^2 k \varphi_k\left(x\right)\right)^2dx\right)^{1/2} \notag \\
		&=& \frac{1}{\sqrt{n}} \left(\sum_{k=1}^{n} d_k^4 a_k^2 \log^4 k\right)^{1/2} \notag \\
		&\leq& D\frac{\log n}{\sqrt{n}}\left(\sum_{k=1}^{n} d_k^2 a_k^2 \log^2 k\right)^{1/2}=O\left(1\right)T_n\left(d,a\right).	\notag 
	\end{eqnarray}
	Lemma \ref{lemma1.2} is proved.
	
\end{proof}

\section{STATEMENT OF THE MAIN PROBLEM}
General ONS were studied by a lot of authors. We mention Gogoladze and Tsagareishvili \cite{GogoladzeTsagareishvili}-\cite{GogoladzeTsagareishvili_5}, Kashin and Saakyan \cite{KashinSaakyan}, Tsagareishvili and Tutberidze \cite{tsatut1,tsatut2}. Convergence and summability of Fourier series with respect to Walsh, Vilinkin, Haar and trigonometric systems were studied by Gogoladze and Tsagareishvili \cite{GogoladzeTsagareishvili_6}, Persson, Tephnadze and Tutberidze \cite{PTT} (see also \cite{BPT}, \cite{PSTW}), Tephnadze \cite{tep1}-\cite{tep4}, Tutberidze \cite{tut1}-\cite{tut3}. Similar problems for the two-dimensional case can be found in Goginava and Gogoladze \cite{gg1}, Persson, Tephnadze and Wall \cite{PTW}.

 From Banachs Theorem \cite{Banach} it follows that if  $f \in L_2 (0,1),\ \left(f\nsim 0\right)$ then there exists an ONS such that  the  Fourier series of  this function $f$ is not convergent on $[0,1]$ with respect to this system. Thus it is clear that the Fourier coefficients of functions of bounded variation in general do not satisfy  condition of  Theorem \ref{theorem1.1}. In the present paper we have studied  the sequence  $(d_n)$ so that the Fourier coefficients  of every function from $V(0,1)$ satisfy the condition
$$\sum_{n=1}^{\infty}d_n^2 C_n^2 \left(f\right) \log^2 n <+\infty.$$
The similar results are obtained in \cite{GogoladzeTsagareishvili}-\cite{GogoladzeTsagareishvili_5}.

\section{The Main Results}
\begin{theorem}\label{theorem3.1}
	Let $\left(\varphi_n\right)$ be an ONS on $\left[0,1\right]$ and $(d_n)$ is a given sequence of numbers. If for any $(a_n)\in l_2 $ 
	\begin{eqnarray} \label{3.1}
		\qquad	G_n\left(d,a\right)=O\left(1\right)T_n\left(d,a\right),
	\end{eqnarray}
 then for every $f\in V(0,1)$

\begin{eqnarray*}
	\sum_{n=1}^{\infty}d_n^2 C_n^2 (f) \log ^2 n < \infty.
\end{eqnarray*}
\end{theorem}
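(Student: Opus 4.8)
The plan is to start from the identity \eqref{1.3}, which rewrites the partial sum $S_n:=\sum_{k=1}^{n}d_k^2 C_k^2(f)\log^2 k$ as the single integral $\int_0^1 f(x)P_n(d,c,x)\,dx$ with $c=(C_n(f))$. Since $f\in V(0,1)\subset L_2(0,1)$, Bessel's inequality gives $c\in l_2$, so hypothesis \eqref{3.1} is applicable with $a=c$. A function of bounded variation takes only finite values, so Lemma \ref{lemma1.1} applies with $g=P_n(d,c,\cdot)$, splitting $\int_0^1 f\,P_n$ into three pieces: an Abel-type sum $\sum_{i=1}^{n-1}\bigl(f(i/n)-f((i+1)/n)\bigr)\int_0^{i/n}P_n(d,c,x)\,dx$, a local-oscillation sum $\sum_{i=1}^{n}\int_{(i-1)/n}^{i/n}\bigl(f(x)-f(i/n)\bigr)P_n(d,c,x)\,dx$, and the boundary term $f(1)\int_0^1 P_n(d,c,x)\,dx$.

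Next I would bound each piece by a constant depending only on $f$ times $T_n(d,c)$. For the first piece, pull out $\max_i\bigl|\int_0^{i/n}P_n(d,c,x)\,dx\bigr|=G_n(d,c)$ and use $\sum_{i=1}^{n-1}|f(i/n)-f((i+1)/n)|\le V(f)$; then \eqref{3.1} yields the bound $V(f)\cdot O(1)\,T_n(d,c)$. The boundary term is likewise $\le |f(1)|\,G_n(d,c)=O(1)|f(1)|\,T_n(d,c)$, since $\int_0^1 P_n(d,c,x)\,dx$ is the $i=n$ term in the maximum defining $G_n$. For the second piece, on the $i$-th subinterval $|f(x)-f(i/n)|\le v_i$, where $v_i$ is the variation of $f$ on $[(i-1)/n,\,i/n]$, so the piece is at most $\sum_{i=1}^{n}v_i\int_{(i-1)/n}^{i/n}|P_n(d,c,x)|\,dx$; Lemma \ref{lemma1.2} bounds each integral by $O(1)\,T_n(d,c)$ uniformly in $i$, and $\sum_{i=1}^{n}v_i=V(f)$, giving the bound $O(1)V(f)\,T_n(d,c)$. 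Combining the three estimates, $S_n=O(1)\,\bigl(V(f)+|f(1)|\bigr)\,T_n(d,c)$ with an absolute $O(1)$.

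Finally, by the definition \eqref{1.5} one has $T_n(d,c)=S_n^{1/2}$, so the last bound reads $S_n\le A\,S_n^{1/2}$ for a constant $A=A(f)$ independent of $n$. Hence $S_n^{1/2}\le A$, i.e. $S_n\le A^2$ for every $n$; since $(S_n)$ is nondecreasing, $\sum_{n=1}^{\infty}d_n^2 C_n^2(f)\log^2 n\le A^2<\infty$, which is the assertion of Theorem \ref{theorem3.1}.

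I expect the only genuine idea to be this self-improving inequality $S_n\le A\,S_n^{1/2}$; the rest is bookkeeping with Lemmas \ref{lemma1.1} and \ref{lemma1.2} and hypothesis \eqref{3.1}. The mildly delicate point will be the local-oscillation sum: one must control $|f(x)-f(i/n)|$ by the variation of $f$ over the subinterval so that these quantities telescope to $V(f)$, while simultaneously invoking Lemma \ref{lemma1.2} with a bound uniform in $i$.
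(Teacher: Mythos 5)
Your proposal is correct and follows essentially the same route as the paper: the identity \eqref{1.3}, the decomposition of Lemma \ref{lemma1.1} with $g=P_n(d,c,\cdot)$, the three bounds via $V(f)G_n(d,c)$, Lemma \ref{lemma1.2}, and the boundary term, and finally the self-improving inequality $S_n\le A\,S_n^{1/2}$. Your treatment of the local-oscillation sum (telescoping the subinterval variations $v_i$ to $V(f)$) is if anything slightly more carefully written than the paper's version, which takes the cruder bound $\sup_i$-times-$V(f)$; no substantive difference.
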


\begin{proof}
	By using Lemma\ref{lemma1.1}, when $g\left(x\right) = P_n \left(d,c,x\right)$ we have ($c=\left(C_k\left(f\right)\right)$)
	\begin{eqnarray}
		\label{3.2} \qquad  &&\int_{0}^{1} f\left(x\right) P_n\left(d,c,x\right)dx \\ &&= \sum_{i=1}^{n-1} \left(f\left(\frac{i}{n}\right)-f\left(\frac{i+1}{n}\right)\right) \int_{0}^{i/n}P_n\left(d,c,x\right)dx \notag \\ 
		&&+ \sum_{i=1}^{n-1} \int_{\left(i-1\right)/n}^{i/n} \left(f\left(x\right)-f\left(\frac{i}{n}\right) \right) P_n \left(d,c,x\right)dx  \notag
		\\ 
		&&+ f(1)\int_{0}^{1}P_n\left(d,c,x\right)dx. \notag
	\end{eqnarray}
	Next (see \eqref{1.3})
	\begin{eqnarray}
		\label{3.3} \qquad \sum_{k=1}^{n} d_k^2 C_k^2\left(f\right) \log^2 k=\int_{0}^{1} f\left(x\right)P_n\left(d,c,x\right)dx. \notag
	\end{eqnarray}
	If $f\in V\left(0,1\right),$ then (see (\ref{3.2})), considering (\ref{3.1}), we get
	
	\begin{eqnarray}
		\label{3.4} \qquad &&\left|\sum_{i=1}^{n-1} \left(f\left(\frac{i}{n}\right)-f\left(\frac{i+1}{n}\right)\right) \int_{0}^{i/n}P_n\left(d,c,x\right)dx\right| \\
		&&\leq\sum_{i=1}^{n-1} \left|f\left(\frac{i}{n}\right)-f\left(\frac{i+1}{n}\right)\right| \max_{1\leq i \leq n} \left|\int_{0}^{i/n}P_n\left(d,c,x\right)dx\right|      \notag \\
		&& \leq {\mathop{V}}  \left(f\right) G_n\left(d,c\right)=O \left(T_n \left(c\right)\right).   \notag
	\end{eqnarray}
	Further, according to lemma \ref{lemma1.2}
	\begin{eqnarray}
		\label{3.5} \qquad &&\left|\sum_{i=1}^{n} \int_{\left(i-1\right)/n}^{i/n} \left(f\left(x\right)-f\left(\frac{i}{n}\right) \right) P_n\left(d,c,x\right)dx\right| \\
		&&\leq\sum_{i=1}^{n} \sup_{x\in \left[\frac{i-1}{n},\frac{i}{n}\right]} \left|f\left(x\right)-f\left(\frac{i}{n}\right)\right|	\int_{\left(i-1\right)/n}^{i/n}\left|P_n\left(d,c,x\right)\right|dx	\notag \\
		&=&O(1){\mathop{V}}\left(f\right) T_n\left(d,c\right)=O\left(1\right)  T_n\left(d,c\right).	\notag
	\end{eqnarray}
	It is easy to see that (see (\ref{1.4}))
	\begin{equation*}
	\left|f(1)\int_{0}^{1}P_n\left(d,c,x\right)\right|=O(1)G_n(d,c)=O(1)T_n(d,c).
	\end{equation*}
	Considering (\ref{3.4}) and (\ref{3.5}), from (\ref{3.2}) and (\ref{3.3}), we get
	\begin{eqnarray}
		\qquad \sum_{k=1}^{n} d_k^2 C_k^2\left(f\right) \log^2 k = O\left(1\right)T_n\left(d,c\right)=O(1)\left(\sum_{k=1}^{n}d_k^2C_k^2\left(f\right) \log^2 k\right)^{1/2}.	\notag
	\end{eqnarray}
	It follows that
	$$\sum_{k=1}^{\infty} d_k^2 C_k^2\left(f\right) \log^2 k < +\infty.$$
	Theorem \ref{theorem3.1} is proved.
	
\end{proof}

\begin{theorem}
	\label{theorem3.}
	Let $\varphi_n$ be an ONS on $[0,1]$ and   $(d_n)$ is a given sequence of numbers. If for any $\left(a_n\right)\in l_2$
	$$G_n(d,a)=O(1)T_n(d,a),$$
then $(d_n)$ is a multiplier of convergence with respect to class $V(0,1),$ or the series	
$$\sum_{n=1}^{\infty}d_n C_n(f)\varphi_n(x)$$
converges a.e. on $[0,1]$ for every $f\in V(0,1).$
	
Validity of Theorem \ref{theorem3.} follows from Theorem \ref{theorem3.1} and \ref{theorem1.1}.	
	
\end{theorem}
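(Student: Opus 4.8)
The plan is to deduce this statement as an immediate corollary of the two results already at our disposal: Theorem~\ref{theorem3.1} supplies the square-summability with logarithmic weights of the \emph{modified} Fourier coefficients $d_nC_n(f)$, and the Menshov--Rademacher Theorem~\ref{theorem1.1} turns any such square-summability into a.e.\ convergence of the corresponding orthonormal series. So the argument is a short chaining of these two facts, plus a check of the minor hypotheses.

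First I would fix an arbitrary $f\in V(0,1)$. Since every function of bounded variation on $[0,1]$ is bounded, $f\in L_2(0,1)$, so the Fourier coefficients $C_n(f)$ of~\eqref{1.1} are well defined and $(C_n(f))\in l_2$ by Bessel's inequality. Consequently the hypothesis ``$G_n(d,a)=O(1)T_n(d,a)$ for every $(a_n)\in l_2$'' applies in particular to $a=c:=(C_n(f))$, and Theorem~\ref{theorem3.1} gives
\begin{equation*}
	\sum_{n=1}^{\infty} d_n^2 C_n^2(f)\log^2 n < +\infty .
\end{equation*}

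Next I would set $c_n:=d_nC_n(f)$ for $n\ge 1$. The displayed estimate above states precisely that $\sum_{n=1}^{\infty} c_n^2\log^2 n<+\infty$ (the $n=1$ term vanishes since $\log 1=0$, so the weight causes no trouble). Thus the number sequence $(c_n)$ satisfies the hypothesis of the Menshov--Rademacher Theorem~\ref{theorem1.1} relative to the ONS $(\varphi_n)$, and we conclude that
\begin{equation*}
	\sum_{n=1}^{\infty} c_n\varphi_n(x)=\sum_{n=1}^{\infty} d_n C_n(f)\varphi_n(x)
\end{equation*}
converges a.e.\ on $[0,1]$. As $f\in V(0,1)$ was arbitrary and $(d_n)$ is a positive bounded sequence, Definition~\ref{definition1.1} shows that $(d_n)$ is a multiplier of convergence with respect to the class $V(0,1)$, which is the assertion of the theorem.

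There is no genuine obstacle here once Theorem~\ref{theorem3.1} is granted; the only points deserving a word of care are the inclusion $V(0,1)\subset L_2(0,1)$ (so that the coefficients $C_n(f)$ exist and form an $l_2$ sequence, legitimizing the use of the hypothesis with $a=c$) and the harmless behaviour of the logarithmic weight at $n=1$. All the real work is contained in Theorem~\ref{theorem3.1}, whose proof has already been carried out above.
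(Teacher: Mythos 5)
Your proposal is correct and follows exactly the route the paper itself indicates: apply Theorem~\ref{theorem3.1} to obtain $\sum_n d_n^2C_n^2(f)\log^2 n<\infty$ for each $f\in V(0,1)$, then invoke the Menshov--Rademacher Theorem~\ref{theorem1.1} with $c_n=d_nC_n(f)$ to get a.e.\ convergence. The paper gives only the one-line remark that the result follows from Theorems~\ref{theorem3.1} and~\ref{theorem1.1}; your write-up simply fills in the same chain of reasoning, including the harmless checks that $V(0,1)\subset L_2(0,1)$ and that the weight at $n=1$ causes no difficulty.
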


\begin{theorem}\label{theorem3.2}
	Let $\varphi_n$ be ONS on $[0,1]$ and  $(d_n)$ is a given bounded decreasing sequence of numbers. If for some $\left(b_n\right)\in l_2$
	$$\lim \sup_{n \rightarrow \infty}  \frac{G_n\left(b\right)}{T_n\left(b\right)}=+\infty.$$
	Then, there exist function $f_0\in A$, such that
	$$\sum_{k=1}^{\infty} d_k^2 C_k^2\left(f_0\right) \log^2 k = +\infty.$$
\end{theorem}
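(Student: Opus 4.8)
The natural route is the Banach--Steinhaus theorem in the Banach space $A$ (writing $G_n(b)=G_n(d,b)$ and $T_n(b)=T_n(d,b)$ as in \eqref{1.4}, \eqref{1.5}). For every $n$ large enough that $T_n(d,b)>0$ — which we may assume, otherwise the hypothesis is meaningless — define the functional $L_n\colon A\to\mathbb{R}$ by
\[
L_n(f)=\frac{1}{T_n(d,b)}\int_0^1 f(x)\,P_n(d,b,x)\,dx .
\]
Since $\int_0^1 f(x)P_n(d,b,x)\,dx=\sum_{k=1}^{n} d_k^2 b_k\log^2 k\,C_k(f)$, applying the Cauchy inequality to the products $\bigl(d_k\log k\,C_k(f)\bigr)\bigl(d_k\log k\,b_k\bigr)$ yields, for every $f\in A$ and every $n$,
\[
\Bigl(\sum_{k=1}^{n} d_k^2 C_k^2(f)\log^2 k\Bigr)^{1/2}\ \ge\ \frac{\bigl|\int_0^1 f(x)P_n(d,b,x)\,dx\bigr|}{T_n(d,b)}=\bigl|L_n(f)\bigr| .
\]
Thus it suffices to exhibit a single $f_0\in A$ with $\sup_n\bigl|L_n(f_0)\bigr|=+\infty$.

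First I would check that each $L_n$ lies in $A^{*}$: the computation from the proof of Lemma \ref{lemma1.2} gives $\|P_n(d,b,\cdot)\|_{L_2}^2=\sum_{k=1}^{n} d_k^4 b_k^2\log^4 k\le D^2\log^2 n\,T_n^2(d,b)$ with $D=\sup_k d_k$, so by the Cauchy inequality and $\|f\|_{L_2}\le\|f\|_C\le\|f\|_A$ one gets $|L_n(f)|\le D\log n\,\|f\|_A$. By the uniform boundedness principle, if $\sup_n|L_n(f)|<\infty$ held for every $f\in A$, then $\sup_n\|L_n\|_{A^{*}}<\infty$. Hence it is enough to prove $\sup_n\|L_n\|_{A^{*}}=+\infty$; granting that, Banach--Steinhaus produces $f_0\in A$ with $\sup_n|L_n(f_0)|=+\infty$, and since the left-hand side of the inequality above is non-decreasing in $n$, this forces $\sum_{k=1}^{\infty}d_k^2C_k^2(f_0)\log^2 k=+\infty$.

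The heart of the proof is therefore the lower estimate
\[
\|L_n\|_{A^{*}}\ \ge\ \frac12\Bigl(\frac{G_n(d,b)}{T_n(d,b)}-D\,\frac{\log n}{\sqrt n}\Bigr).
\]
To prove it, let $i_n\in\{1,\dots,n\}$ realise the maximum defining $G_n(d,b)$ and let $f_n$ be the continuous, piecewise linear function equal to $1$ on $[0,i_n/n]$, equal to $0$ on $[(i_n+1)/n,1]$, and linear in between (and $f_n\equiv1$ if $i_n=n$). Then $\|f_n\|_C=1$ and $\int_0^1|f_n'|\le1$, so $\|f_n\|_A\le2$, while
\[
\int_0^1 f_n(x)P_n(d,b,x)\,dx=\int_0^{i_n/n}P_n(d,b,x)\,dx+R_n,\qquad |R_n|\le\int_{i_n/n}^{(i_n+1)/n}\bigl|P_n(d,b,x)\bigr|\,dx .
\]
The proof of Lemma \ref{lemma1.2} bounds this last integral by $D\,\tfrac{\log n}{\sqrt n}\,T_n(d,b)$, so $|L_n(f_n)|\ge G_n(d,b)/T_n(d,b)-D\log n/\sqrt n$; dividing by $\|f_n\|_A\le2$ gives the claim. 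Since $\log n/\sqrt n\to0$ while the hypothesis supplies a subsequence along which $G_n(d,b)/T_n(d,b)\to+\infty$, we conclude $\sup_n\|L_n\|_{A^{*}}=+\infty$ and the theorem follows.

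I expect the genuinely delicate point to be this last estimate — choosing the test functions $f_n$ and confirming that the ramp error $R_n$ is negligible against $G_n(d,b)$ along the relevant subsequence; everything else is bookkeeping with standard functional analysis. I note that only the boundedness of $(d_n)$ enters this argument (through $D=\sup_k d_k$), not its monotonicity, so either the decreasing hypothesis is stated for uniformity with Theorems \ref{theorem3.1} and \ref{theorem3.}, or the authors prefer a direct gliding-hump construction of $f_0$ in which monotonicity of $(d_n)$ is used; the Banach--Steinhaus route above seems the shortest.
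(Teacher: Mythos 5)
Your proof is correct and follows essentially the same route as the paper: Banach--Steinhaus applied to the functionals $f\mapsto T_n^{-1}(d,b)\int_0^1 f(x)P_n(d,b,x)\,dx$ on $A$, with piecewise-linear ramp test functions located at $i_n/n$ whose ramp error is controlled by Lemma \ref{lemma1.2}, followed by the Cauchy inequality to convert unboundedness of the functionals at $f_0$ into divergence of $\sum_k d_k^2C_k^2(f_0)\log^2k$. The only (harmless) deviation is that your test function equals $1$ on $[0,i_n/n]$ and vanishes at $x=1$ rather than the reverse, which kills the boundary term $f(1)\int_0^1P_n(d,b,x)\,dx$ and thereby removes the paper's preliminary case distinction on whether $|\int_0^1P_n(d,b,x)\,dx|/T_n(b)$ stays bounded; your remark that only the boundedness of $(d_n)$, not its monotonicity, is used is likewise consistent with the paper's argument.
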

\begin{proof}
	In first case we suppose
	\begin{equation*}
	\lim \sup_{n \rightarrow \infty} \frac{\left|\int_{0}^{1}P_n\left(d,b,x\right)dx\right|}{T_n(b)}=+\infty.
	\end{equation*}
	If  $f_0=1,\text{ }x\in[0,1],$ then using the Cauchy inequality we get 
	\begin{eqnarray*}
	\left|\int_{0}^{1}P_n\left(d,b,x\right)dx\right| &=&  \left|\sum_{k=1}^{n}d_k^2b_k \log^2 k \int_{0}^{1} \varphi_k(x) \right|=\left|\sum_{k=1}^{n}d_k^2b_k \log^2 k C_k(f_0) \right| \\
	&\leq& \left(\sum_{k=1}^{n}d_k^2b_k^2 \log^2 k \right)^{1/2} \left(\sum_{k=1}^{n}d_k^2 C_k^2(f_0) \log^2 k \right)^{1/2} \\
	&=& T_n(b) \left(\sum_{k=1}^{n}d_k^2 C_k^2(f_0) \log^2 k \right)^{1/2}.
	\end{eqnarray*}
	Consequently 
	$$\lim_{n \rightarrow \infty}\left(\sum_{k=1}^{n}d_k^2 C_k^2(f_0) \log^2 k \right)^{1/2}=\lim_{n \rightarrow \infty}\sup\frac{\left|\int_{0}^{1}P_n\left(d,b,x\right)dx\right|}{T_n\left(b\right)}=+\infty.$$
	As $f_0\in A $  Theorem 2 holds.
	
	Next  we suppose that
	$$\left|\int_{0}^{1}P_n\left(d,b,x\right)dx\right|=O(1)T_n(b).$$
Let $1\leq i_n < n$ be an integer, such that 
	$$G_n\left(b\right)=\max_{1\leq i \leq n} \left|\int_{0}^{i/n}P_n\left(d,b,x\right)dx\right|=\left|\int_{0}^{i_n/n}P_n\left(d,c,x\right)dx\right|.$$
	
	Suppose that for some sequence $b=\left(b_k\right)\in l_2$
	\begin{eqnarray}
		\label{3.6}    \lim_{n \rightarrow \infty} \sup \frac{G_n\left(b\right)}{T_n\left(b\right)}=+\infty.
	\end{eqnarray}
Consider the sequence of functions	
	\begin{equation*}
		f_{n}\left( x\right) =\left\{ 
		\begin{array}{ccc}
			0, & \text{when} & x\in \left[ 0,\frac{i_{n}}{n}\right] \\ 
			1, & \text{when} & x\in \left[ \frac{i_{n}+1}{n},1\right] \\ 
			\text{continuous and linear, } & \text{when} & x\in \left[ \frac{i_{n}}{n}, \frac{i_{n}+1}{n} \right].
		\end{array}%
		\right.
	\end{equation*}
Let $A$ be the class of absolutely continuous functions. Then 
	$$\left\Vert f_n\right\Vert_A = \int_{0}^{1}\left|f_{n}^{'} \left(x\right)\right|dx+\left \Vert f_n\left(x\right) \right \Vert_C=2.$$
Furthermore
	\begin{eqnarray}
		\label{3.7} && \left|\sum_{i=1}^{n-1} \left(f_n\left(\frac{i}{n}\right)-f_n\left(\frac{i+1}{n}\right)\right) \int_{0}^{i/n}P_n\left(d,b,x\right)dx\right| \\
		&&=\left|\int_{0}^{{i_n}/n}P_n\left(d,b,x\right)dx\right|=G_n\left(b\right). \notag
	\end{eqnarray}
Then if $x\in \left[\frac{i-1}{n}, \frac{i}{n}\right]$
	\begin{equation*}
		\left|f_{n}\left( x\right)-f_{n}\left( \frac{i}{n}\right)\right| \left\{ 
		\begin{array}{ccc}
			\leq 1, & \text{if} & i=i_{n}+1, \\ 
			0, & \text{if} & i \neq i_{n}+1, \\ 
		\end{array}%
		\right.
	\end{equation*}
	we have (see lemma\ref{lemma1.2})
	\begin{eqnarray}
		\label{3.8}  &&\left|\sum_{i=1}^{n} \int_{\left(i-1\right)/n}^{i/n} \left(f\left(x\right)-f\left(\frac{i}{n}\right) \right) P_n\left(d,b,x\right)dx\right| \\
		&&\leq \int_{ {i_n}/n}^{(i_n+1)/n} \left|P_n\left(d,b,x\right)\right|dx = O\left(1\right)T_n\left(b\right).\notag
	\end{eqnarray}
	Consequently from equality (\ref{3.2}) when $f\left(x\right)=f_n\left(x\right)$ and $P_n\left(d,a,x\right)=P_n\left(d,b,x\right),$ considering (\ref{3.7}) and (\ref{3.8}), we get
	$$\left|\int_{0}^{1}f_n\left(x\right)P_n\left(d,b,x\right)\right|dx \geq G_n\left(b\right)-O\left(1\right)T_n\left(b\right).$$
	
	From here and from (\ref{3.6}) we have
	$$\lim \sup_{n\rightarrow \infty}\frac{\left|\int_{0}^{1}f_n\left(x\right)P_n\left(d,b,x\right)dx\right|}{T_n\left(b\right)}=+\infty.$$
	
	Since 
	$$U_n\left(f\right)=\frac{1}{T_n\left(b\right)}\int_{0}^{1}f\left(x\right)P_n\left(d,b,x\right)dx$$
	is a sequence of linear bounded functionals on $A$, then by the Banach-Steinhaus theorem, there exists a function $f_0\in A$ such that
	$$\lim \sup_{n\rightarrow \infty}\frac{\left|\int_{0}^{1}f_0\left(x\right)P_n\left(d,b,x\right)dx\right|}{T_n\left(b\right)}=+\infty.$$
	Further using the Couchy inequality
	\begin{eqnarray}
		\left|\int_{0}^{1}f_0\left(x\right)P_n\left(d,b,x\right)dx\right| &=& \left|\sum_{k=1}^{n} d_k^2 b_k \log^2 k \int_{0}^{1}f_0\left(x\right)\varphi_k\left(x\right)dx\right| \notag \\
		&=& \left|\sum_{k=1}^{n} d_k^2 b_k \log^2 k C_k\left(f_0\right)\right|\notag \\
		&\leq& \left(\sum_{k=1}^{n} d_k^2 b_k^2\log^2 k\right)^{1/2} \left(\sum_{k=1}^{n}d_k^2C_k^2\left(f_0\right)\log^2 k \right)^{1/2}	\notag \\
		&=& T_n\left(b\right) \left(\sum_{k=1}^{n}d_k^2 C_k^2\left(f_0\right)\log^2 k\right)^{1/2}.   \notag
	\end{eqnarray}
	From here 
	$$\left(\sum_{k=1}^{n}d_k^2 C_k^2\left(f_0\right)\log^2 k \right)^{1/2}\geq \frac{\left|\int_{0}^{1}f_0\left(x\right)P_n\left(d,b,x\right)dx\right|}{T_n\left(b\right)}$$
	and therefore, 
	$$\sum_{k=1}^{\infty}d_k^2 C_k^2\left(f_0\right)\log^2 k =+\infty.$$
	Theorem \ref{theorem3.2} is proved.
	
\end{proof}

Finally the following theorem holds:

\begin{theorem}\label{theorem3.3}
Let $(\varphi_n)$ be ONS on $[0,1]$, $\int_{0}^{1}\varphi_n(x)dx=0,$ $n=1,2, \dots, $ such that uniformly for $x\in [0,1]$
\begin{equation}
\label{3.9} \int_{0}^{x} \varphi_n(y)dy=O\left(\frac{1}{n}\right)
\end{equation}
and $(d_n)$ is an arbitrary non-decreasing sequence of numbers such that 
$$\lim_{n \rightarrow \infty}d_n=+\infty \text{ and } d_n=O\left(n^{\gamma}\right), \text{ } 0<\gamma<1.$$
Then for any $f\in V$  the series 
$$\sum_{n=1}^{\infty}d_n C_n(f)\varphi_n(x)$$
is convergent a.e. on $[0,1].$
\end{theorem}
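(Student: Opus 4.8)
The plan is to derive Theorem~\ref{theorem3.3} from the Menshov--Rademacher theorem (Theorem~\ref{theorem1.1}): I would show that the hypotheses force
$$\sum_{n=1}^{\infty}d_n^2C_n^2(f)\log^2 n<+\infty\qquad(f\in V(0,1)),$$
after which Theorem~\ref{theorem1.1} gives a.e.\ convergence of $\sum_n d_nC_n(f)\varphi_n$. To obtain this estimate I would repeat the argument of Theorem~\ref{theorem3.1}, the new point being that the two conditions $\int_0^1\varphi_n=0$ and $\int_0^x\varphi_n(y)\,dy=O(1/n)$ let one bound $G_n(d,c)$ directly, so that assumption~\eqref{3.1} is not needed. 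Put $\Phi_k(x)=\int_0^x\varphi_k(y)\,dy$, so $\|\Phi_k\|_C\le M/k$ and $\Phi_k(1)=0$; integrating by parts (Stieltjes), $C_k(f)=-\int_0^1\Phi_k(t)\,df(t)$, whence $|C_k(f)|\le M\,V(f)/k$ — this is the first use of the hypothesis.

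Applying Lemma~\ref{lemma1.1} with $g=P_n(d,c,\cdot)$ to $Q_n:=\sum_{k=1}^nd_k^2C_k^2(f)\log^2 k=\int_0^1f(x)P_n(d,c,x)\,dx$, exactly as in Theorem~\ref{theorem3.1}: the boundary term $f(1)\int_0^1P_n(d,c,x)\,dx$ vanishes since $\int_0^1\varphi_k=0$; the oscillation sum is $O(1)V(f)Q_n^{1/2}$, in fact $O\!\big(\tfrac{\log n}{\sqrt n}\big)V(f)Q_n^{1/2}$, by the proof of Lemma~\ref{lemma1.2}; and since $\int_0^{i/n}P_n(d,c,x)\,dx=\sum_{k=1}^nd_k^2C_k(f)\log^2 k\,\Phi_k(i/n)$,
$$G_n(d,c)\le M\sum_{k=1}^n\frac{d_k^2|C_k(f)|\log^2 k}{k}\le Q_n^{1/2}\Big(M^2\sum_{k=1}^n\frac{d_k^2\log^2 k}{k^2}\Big)^{1/2}$$
using $|\Phi_k(i/n)|\le M/k$ and the Cauchy inequality. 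Hence $Q_n\le V(f)Q_n^{1/2}\big(M^2\sum_{k\le n}d_k^2\log^2 k/k^2\big)^{1/2}+o(1)Q_n^{1/2}$, so that
$$Q_n\le C\,V(f)^2\sum_{k=1}^{\infty}\frac{d_k^2\log^2 k}{k^2}+o(1).$$
Everything therefore reduces to the convergence of $\sum_k d_k^2\log^2 k/k^2$; with $d_k=O(k^\gamma)$ this is $O\!\big(\sum_k k^{2\gamma-2}\log^2 k\big)$, which converges when $\gamma<\tfrac12$, and in that case $\sup_nQ_n<+\infty$ and the proof is complete.

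The step I expect to be the main obstacle is the full range $0<\gamma<1$, i.e.\ $\tfrac12\le\gamma<1$: there $\sum_k k^{2\gamma-2}\log^2 k$ diverges, the computation above only bounds the partial sums of $\sum_nd_n^2C_n^2(f)\log^2 n$ by a slowly growing function of $n$, and the mere size $|d_nC_n(f)|=O(n^{\gamma-1})$ does not, by itself, yield a.e.\ convergence for a general ONS once $\gamma\ge\tfrac12$. For this range one must leave the $\ell^2$ machinery and work directly on the series by summation by parts: with $R_m(x)=\sum_{j>m}C_j(f)\varphi_j(x)$ the remainder of the (a.e.\ convergent, because $C_j(f)=O(1/j)$) Fourier series of $f$, one has
$$\sum_{n=1}^{N}d_nC_n(f)\varphi_n(x)=\sum_{m=1}^{N}(d_m-d_{m-1})R_{m-1}(x)-d_NR_N(x)\qquad(d_0:=0),$$
so it suffices to show $d_NR_N(x)\to0$ a.e.\ and that $\sum_m(d_m-d_{m-1})R_{m-1}(x)$ converges a.e.; both follow if $R_m(x)=O(m^{-\alpha})$ a.e.\ for some $\alpha>\gamma$, since then $\sum_m(d_m-d_{m-1})m^{-\alpha}<+\infty$ and $d_Nm^{-\alpha}\to0$ (this is where $\gamma<1$ enters). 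Proving such a pointwise decay of the Fourier remainders of a bounded-variation function for systems satisfying only $\int_0^x\varphi_n=O(1/n)$ — presumably via $R_m(x)=-\int_0^1\big(\sum_{j>m}\Phi_j(t)\varphi_j(x)\big)\,df(t)$ together with a maximal-function bound for the orthogonal series $\sum_j\Phi_j(t)\varphi_j(x)$, using $|\Phi_j|=O(1/j)$ in a genuinely pointwise way — is, I expect, the technical heart of the theorem.
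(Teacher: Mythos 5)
Your first computation is, almost line for line, the paper's own proof: the authors apply the decomposition \eqref{1.2} with $g=P_n(d,c,\cdot)$, kill the boundary term using $\int_0^1\varphi_k=0$, bound $G_n(d,c)$ by $O(1)T_n(c)\bigl(\sum_{k\le n}d_k^2\log^2k\,/k^2\bigr)^{1/2}$ via \eqref{3.9} and the Cauchy inequality (their \eqref{3.10}), bound the oscillation term by $O(1)\frac{d_n\log n}{\sqrt n}T_n(c)$ as in Lemma \ref{lemma1.2} (their \eqref{3.11}), and finish with Theorem \ref{theorem1.1}. The one substantive difference is that you stop to ask when the two quantities $\sum_{k\le n}d_k^2\log^2k\,/k^2$ and $d_n\log n/\sqrt n$ are actually $O(1)$, and your answer --- only for $\gamma<\tfrac12$ --- is correct. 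The paper asserts $\bigl(\sum_{k=1}^nk^{2\gamma}/k^2\bigr)^{1/2}=O(1)$ and $n^{\gamma}\log n/\sqrt n=O(1)$ without comment; both statements are false for $\tfrac12\le\gamma<1$ (the first sum grows like $\log n$ at $\gamma=\tfrac12$ and like $n^{2\gamma-1}$ beyond, the second factor like $n^{\gamma-1/2}\log n$). So the ``main obstacle'' you flag is not a defect of your attempt relative to the paper: it is a genuine gap in the published proof, which as written establishes the theorem only for $0<\gamma<\tfrac12$. Your observation that for the trigonometric system $|C_n(f)|=O(1/n)$ for $f\in V$, so that $\sum d_n^2C_n^2(f)\log^2n$ should be expected to diverge once $\gamma\ge\tfrac12$ and the whole Menshov--Rademacher route becomes unavailable, is exactly the right diagnosis.

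Your proposed repair for $\tfrac12\le\gamma<1$ is, however, only a plan, not a proof. The Abel-summation reduction is fine, but it hinges on an a.e.\ pointwise decay $R_m(x)=O(m^{-\alpha})$ with $\alpha>\gamma$ for the Fourier tails of an arbitrary $f\in V$; that is a strong quantitative statement which does not follow from \eqref{3.9} alone, and you do not supply the ``maximal-function bound'' it would require. As it stands, neither your argument nor the paper's covers the range $\gamma\ge\tfrac12$. Two minor local points in the part you did prove: the sharpened constant in your oscillation bound should be $d_n\log n/\sqrt n$ rather than $\log n/\sqrt n$, since the constant in Lemma \ref{lemma1.2} involves $\sup_kd_k$ and here $(d_n)$ is unbounded (this still gives $o(1)$ when $\gamma<\tfrac12$, so your conclusion stands); and the integration-by-parts estimate $|C_k(f)|\le MV(f)/k$, though correct, is never actually used.
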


\begin{proof}
	According to the condition of Theorem \ref{theorem3.3} and using the Cauchy inequality we get (see (\ref{1.4})) 
	\begin{eqnarray}
	\label{3.10} \left|\int_{0}^{x}P_n(d,c,y)dy\right|&=&\left|\sum_{k=1}^{n}d_k^2 C_k(f) \log ^2 k \int_{0}^{x}\varphi_k(y)dy\right| \\ \notag &=&O(1)\sum_{k=1}^{n}d_k^2 \left|C_k(f)\right| \log ^2 k \frac{1}{k} \\
	\notag &=& O(1)\left(\sum_{k=1}^{n}d_k^2 C_k^2(f) \log ^2 k\right)^{1/2}\left(\sum_{k=1}^{n}d_k^2 \log ^2 k \frac{1}{k^2}\right)^{1/2}   \\
	\notag &=& O(1) T_n(c) \left(\sum_{k=1}^{n}d_k^2 \log ^2 k \frac{1}{k^2}\right)^{1/2}  \\
	\notag &=& O(1) T_n(c) \left(\sum_{k=1}^{n} \frac{k^{2 \gamma}}{k^2} \right)^{1/2} = O(1) T_n(c)
	\end{eqnarray}
	Next as $f\in V$ by the  Cauchy inequality (see (\ref{3.9}))
	
	\begin{eqnarray}
	\label{3.11}  && \left|\sum_{i=1}^{n}\int_{\frac{i-1}{n}}^{\frac{i}{n}}\left(f(x)-f\left(\frac{i}{n}\right)\right)P_n(d,c,x)\right|\\
	\notag &&= O(1)\sum_{i=1}^{n}\sup_{x\in\left[\frac{i-1}{n},\frac{i}{n}\right]}\left|f(x)-f\left(\frac{i}{n}\right)\right|\left(\int_{\frac{i-1}{n}}^{\frac{i}{n}}\left|P_n(d,c,x)dx\right|\right)^{1/2} \\
	\notag &&= O(1)\frac{1}{\sqrt{n}}\left(\sum_{k=1}^{n}d_k^4 C_k^2(f) \log ^4 k\right)^{1/2} \\
	&&=O(1)\frac{d_n \log n}{\sqrt{n}}\left(\sum_{k=1}^{n}d_k^2 C_k^2(f) \log ^2 k\right)^{1/2} \notag \\
	\notag &&=O(1)\frac{n^{\gamma}\log n}{\sqrt{n}}T_n(c)=O(1)T_n(c).
	\end{eqnarray}
	Using (\ref{1.2}),(\ref{3.10}) and (\ref{3.11}) from (\ref{1.3}) we receive 
	
	\begin{equation*}
	\sum_{k=1}^{n}d_k ^2 C_k ^2(f) \log ^2 k = O(1)T_n(d,c)=O(1)\left(\sum_{k=1}^{n}d_k^2 C_k^2(f) \log ^2 k\right)^{1/2}.
	\end{equation*}
	From here we conclude 
	
	\begin{equation*}
	\sum_{k=1}^{\infty}d_k^2 C_k^2(f) \log ^2 k < +\infty.
	\end{equation*}
	Finally according to the Menshov-Rademacher Theorem the series
	\begin{equation*}
	\sum_{k=1}^{\infty}d_k C_k(f) \varphi_k(x)
	\end{equation*}
	converges a.e. on $[0,1]$ and	Theorem \ref{theorem3.3} is proved.
	
\end{proof}

	It easy to see that Theorem \ref{theorem3.3} holds for trigonometric and Walsh systems (see \cite{KashinSaakyan}, ch.4, p.117, p.150).

\begin{theorem}	\label{theorem3.4} 
	Let $(h_n)$ be an increasing sequence of numbers such that $\lim\limits_{n\rightarrow \infty} h_n=+\infty$ and $h_n=O(1)\frac{\sqrt{n}}{\log (n+1)}.$ Then from any ONS $(\varphi_n)$ one can insolate a subsequence  $(\varphi_{n_k})$ such that for an arbitraty $f\in V$
	$$\sum_{k=1}^{\infty}h_k^2 C_{n_k}^2(f)\log^2k <+\infty.$$
\end{theorem}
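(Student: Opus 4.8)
The plan is to run the scheme of the proof of Theorem \ref{theorem3.3}, but with the structural hypothesis \eqref{3.9} replaced by a judicious choice of the subsequence $(n_k)$, which will be made once and for all, independently of $f$. For an arbitrary ONS $(\varphi_n)$ set $\Phi_m(x)=\int_0^x\varphi_m(y)\,dy$. Two properties of these functions are needed. First, by the Cauchy inequality $|\Phi_m(x)-\Phi_m(y)|=|\int_y^x\varphi_m|\le|x-y|^{1/2}$, so the $\Phi_m$ are $\tfrac12$-H\"older with constant $1$, and in particular $\|\Phi_m\|_C\le1$ for all $m$. Second, $\|\Phi_m\|_C\to0$ as $m\to\infty$: since $\Phi_m(x)$ is the $m$-th Fourier coefficient of $\mathbf{1}_{[0,x]}\in L_2(0,1)$, Bessel's inequality gives $\sum_m\Phi_m(x)^2\le x$ for each $x$, hence $\sum_m\int_0^1\Phi_m^2\le\tfrac12$ and $\int_0^1\Phi_m^2\to0$; combined with the uniform H\"older bound this forces uniform decay (if $|\Phi_m(x_0)|\ge\delta$ then $|\Phi_m|\ge\delta/2$ on an interval of length $\gtrsim\delta^2$, so $\int_0^1\Phi_m^2\gtrsim\delta^4$), or one may simply note that $\Phi_m\to0$ weakly in $L_2$ and invoke the Arzel\`a--Ascoli theorem. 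Consequently we may pick $n_1<n_2<\cdots$ so sparse that $\|\Phi_{n_k}\|_C\le 2^{-k}\big(h_k\log(k+1)\big)^{-1}$ for every $k$; then
$$M:=\sum_{k=1}^\infty h_k^2\log^2 k\,\|\Phi_{n_k}\|_C^2<\infty .$$

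Now fix $f\in V$ (so $f$ is bounded, which legitimizes Lemma \ref{lemma1.1}), put $c=(C_{n_k}(f))$, $d=(h_k)$,
$$P_n(d,c,x)=\sum_{k=1}^n h_k^2 C_{n_k}(f)\log^2 k\,\varphi_{n_k}(x),\qquad T_n(c)=\Big(\sum_{k=1}^n h_k^2 C_{n_k}^2(f)\log^2 k\Big)^{1/2},$$
so that, exactly as in \eqref{1.3}, $\sum_{k=1}^n h_k^2 C_{n_k}^2(f)\log^2 k=\int_0^1 f(x)P_n(d,c,x)\,dx$. Applying Lemma \ref{lemma1.1} with $g=P_n(d,c,\cdot)$ splits this into the three familiar terms. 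For the first sum and the term $f(1)\int_0^1P_n$ we use that, by the Cauchy inequality, $\big|\int_0^{t}P_n(d,c,x)\,dx\big|=\big|\sum_{k=1}^n h_k^2 C_{n_k}(f)\log^2 k\,\Phi_{n_k}(t)\big|\le T_n(c)\big(\sum_{k=1}^n h_k^2\log^2 k\,\Phi_{n_k}(t)^2\big)^{1/2}\le\sqrt M\,T_n(c)$ for every $t\in[0,1]$; together with $V(f)<\infty$ this bounds both by $O(1)T_n(c)$. For the middle sum, the Cauchy inequality and the orthonormality of the subsystem give $\int_{(i-1)/n}^{i/n}|P_n(d,c,x)|\,dx\le n^{-1/2}\big(\sum_{k=1}^n h_k^4 C_{n_k}^2\log^4 k\big)^{1/2}\le\frac{h_n\log n}{\sqrt n}\,T_n(c)=O(1)T_n(c)$, where we used that $h_k^2\log^2 k$ is nondecreasing in $k$ and that $h_n=O(\sqrt n/\log(n+1))$; since $\sum_{i=1}^n\sup_{[(i-1)/n,i/n]}|f(x)-f(i/n)|\le V(f)$, the middle sum is $O(1)V(f)T_n(c)=O(1)T_n(c)$.

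Collecting the three estimates yields $\sum_{k=1}^n h_k^2 C_{n_k}^2(f)\log^2 k=O(1)\big(\sum_{k=1}^n h_k^2 C_{n_k}^2(f)\log^2 k\big)^{1/2}$ uniformly in $n$, so the partial sums are bounded and $\sum_{k=1}^\infty h_k^2 C_{n_k}^2(f)\log^2 k<\infty$; by the Menshov--Rademacher theorem the series $\sum_k h_k C_{n_k}(f)\varphi_{n_k}(x)$ then also converges a.e.\ on $[0,1]$. The only genuine difficulty is the uniform decay $\|\Phi_m\|_C\to0$ for a general ONS; once it is available, the argument is the by-now-standard three-term decomposition of Lemma \ref{lemma1.1}, and the growth restriction $h_n=O(\sqrt n/\log(n+1))$ is precisely what makes the middle term $O(1)T_n(c)$.
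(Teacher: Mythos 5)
Your proof is correct and follows the same overall strategy as the paper's: choose the subsequence once and for all so that the primitives $\Phi_{n_k}(x)=\int_0^x\varphi_{n_k}(y)\,dy$ decay rapidly and uniformly in $x$, and then run the three-term decomposition of Lemma \ref{lemma1.1} exactly as in Theorem \ref{theorem3.3}. The only point of genuine divergence is the extraction step. The paper assumes without loss of generality that the system is complete, applies Parseval to get $\sum_n\Phi_n(x)^2=x$, and invokes Dini's theorem to make the tails uniformly small, arriving at the uniform bound $|\Phi_{n_k}(x)|<k^{-2}$ of \eqref{aliona}; you instead use only Bessel's inequality together with the uniform $\tfrac12$-H\"older bound $|\Phi_m(x)-\Phi_m(y)|\le|x-y|^{1/2}$ to conclude $\|\Phi_m\|_C\to0$, and then pick $(n_k)$ so that $\sum_k h_k^2\log^2k\,\|\Phi_{n_k}\|_C^2<\infty$ directly. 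Your route is self-contained, avoids the completeness reduction and Dini's theorem (and with it the mild worry that a subsequence extracted from a completed system need not a priori consist of the original functions), and packages the decay in exactly the form the first and third terms of the decomposition require; the paper's version gives the slightly more explicit rate $k^{-2}$ and then absorbs the weights $h_k^2\log^2k=O(k)$ by the hypothesis $h_k=O(\sqrt k/\log(k+1))$. From that point on your estimates coincide with \eqref{nt}, \eqref{nt111} and \eqref{al22}, and the conclusion of bounded partial sums is obtained identically. The closing appeal to Menshov--Rademacher is not needed for this statement (that is the content of Theorem \ref{theorem3.5}), but it is harmless.
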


\begin{proof}
	Without the loss of generality we can suppose that the  ONS $(\varphi_n)$ is a complet system. Then by the   Parseval equality, for any $x \in [0,1]$
	
	$$\sum_{n=1}^{\infty}\left(\int_{0}^{x} \varphi_n(u)du\right)^2=x.$$
	Consequently (Dini Theorem) for some sequence $(n_k)$ of natural numbers
	$$\sum_{n=n_k}^{\infty}\left(\int_{0}^{x} \varphi_n(u)du\right)^2 < \frac{1}{k^4}. $$
	From here uniformly with respect to $x\in[0,1]$
	\begin{equation} \label{aliona}
		\left|\int_{0}^{x} \varphi_{n_k}(u)du\right|< \frac{1}{k^2}.
	\end{equation}
	We denote $\left((a_n)\in l_2\right)$
	$$Q_m(h,a,x)= \sum_{k=1}^{m}h_k^2 a_k \log ^2 k \varphi_{n_k}(x).$$
	 Using \eqref{aliona} and Cauchy inequality we get  (see \eqref{3.10})
	 \begin{eqnarray} 	\label{nt} 
	 	&&\max_{1\leq i \leq m} \left|\int_{0}^{i/m}Q_m(h,a,x)dx\right|\\
	 	&& = \max_{1\leq i \leq m}\left|\sum_{k=1}^{m}h_k^2 a_k \log ^2 k \int_{0}^{i/m} \varphi_{n_k}(x) dx\right| \notag \\
	 	&& = O(1) \sum_{k=1}^{m}h_k^2 \left|a_k\right| \log ^2 k  \  \frac{1}{k^2} \notag  \\
	 	&& = O(1) \left(\sum_{k=1}^{m}h_k^2 a_k^2 \log ^2 k\right)^{1/2}  \left(\sum_{k=1}^{m}h_k^2  \log ^2 k \  \frac{1}{k^4}\right)^{1/2}    \notag \\
	 	&& = O(1)\left(\sum_{k=1}^{m}h_k^2 a_k^2 \log ^2 k\right)^{1/2} \left(\sum_{k=1}^{m} \frac{k\log ^2 k}{\log ^2 (k+1)k^4}\right)^{1/2}  \notag \\
	 	&& =O(1)\left(\sum_{k=1}^{m}h_k^2 a_k^2 \log ^2 k\right)^{1/2} \notag
	 \end{eqnarray}
	Next, for any $i=1,2, \dots ,m$  (see \eqref{3.11})
	
	\begin{eqnarray} \label{nt111}
		\int_{{i-1}/m}^{i/m}\left|Q_m(h,a,x)\right|dx &\leq& \frac{1}{\sqrt{m}}\left(\int_{0}^{1}Q_m ^2 (h,a,x)dx\right)^{1/2} \\ &=&O(1)\frac{1}{\sqrt{m}}\left(\sum_{k=1}^{m}h_k^4 a_k ^2 \log ^4 k \right)^{1/2} \notag \\
		&=& O(1)\frac{h_m \log m}{\sqrt{m}} \left(\sum_{k=1}^{m}h_k^2 a_k ^2 \log ^2 k \right)^{1/2} \notag \\
		&=& O(1) \frac{\sqrt{m} \log m}{\log (m+1) \sqrt{m} } \left(\sum_{k=1}^{m}h_k^2 a_k ^2 \log ^2 k \right)^{1/2}	\notag \\
		&=& O(1)  \left(\sum_{k=1}^{m}h_k^2 a_k ^2 \log ^2 k \right)^{1/2}.	\notag 
	\end{eqnarray}
	Also (see \eqref{aliona} and \eqref{nt})
	\begin{equation}
		\label{al22}
		\left|\int_{0}^{1}Q_m(h,a,x)dx\right|=O(1)  \left(\sum_{k=1}^{m}h_k^2 a_k ^2 \log ^2 k \right)^{1/2}.
	\end{equation}
	
	As it was shown in \eqref{1.3}
	\begin{eqnarray}
		\label{al09} \sum_{k=1}^{m}h_k^2 C_{n_k} ^2(f) \log ^2 k &=&\int_{0}^{1}f(x) \sum_{k=1}^{m}h_k^2 C_{n_k}(f) \log ^2 k \varphi_{n_k}(x)dx \\
		&=& \int_{0}^{1}f(x)Q_m(h,c,x)dx. \notag
	\end{eqnarray}

 Taking into account \eqref{3.2} and \eqref{al09} where $Q_m (h,c,x)=P_n (d,c,x),$ $f\in V(0,1)$ and estimates \eqref{nt}, \eqref{nt111}, \eqref{al22}  where  $a=c,$   $a_k=C_{n_k}(f),$ we obtain 
 \begin{eqnarray*}
 	\left|\sum_{k=1}^{m}h_k^2 C_{n_k}^2(f) \log ^2 k\right|&=&\left|\int_{0}^{1}f(x)Q_m(h,c,x)dx\right|\\
 	&=&O(1) \left|V(f)+f(1)\right|\left(\sum_{k=1}^{m}h_k^2 C_{n_k}^2(f) \log ^2 k\right)^{1/2}.
 \end{eqnarray*}
From here
$$\sum_{k=1}^{\infty}h_k^2 C_{n_k}^2(f) \log ^2 k < +\infty.$$

   Theorem \ref{theorem3.4} is completely proved.
   
\end{proof}

\begin{theorem}
	\label{theorem3.5}
	Let $(h_n)$ be an increasing sequence of numbers such that $$\lim\limits_{n\rightarrow \infty}h_n=+\infty \text{ and } h_n = O(1)\frac{\sqrt{n}}{\log (n+1)}.$$
	Then from any ONS $(\varphi_n)$ one can insolate a subsequence $(\varphi_{n_k}(x))$ such that for an arbitrary $f\in V$ the series
	$$\sum_{k=1}^{\infty}h_k C_{n_k}(f) \varphi_{n_k} $$ 
	is convergent a.e. on $[0,1].$
	
	The validity of Theorem \ref{theorem3.5} derives from Theorems \ref{theorem3.4} and \ref{theorem3.1}. 
	
\end{theorem}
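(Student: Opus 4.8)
The plan is to deduce Theorem~\ref{theorem3.5} by feeding the coefficient estimate furnished by Theorem~\ref{theorem3.4} into the Menshov--Rademacher theorem. First I would invoke Theorem~\ref{theorem3.4}: since $(h_n)$ is increasing, $h_n\to+\infty$ and $h_n=O(1)\sqrt{n}/\log(n+1)$, that theorem produces a single subsequence $(\varphi_{n_k})$ of the given ONS $(\varphi_n)$ such that
\begin{equation*}
	\sum_{k=1}^{\infty}h_k^2\,C_{n_k}^2(f)\,\log^2 k<+\infty
\end{equation*}
for every $f\in V(0,1)$. The key point to record is that this subsequence is fixed once and for all, independently of~$f$.

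Next I would observe that any subfamily of an ONS on $[0,1]$ is again an ONS on $[0,1]$; in particular $(\varphi_{n_k})_{k\ge 1}$ is an ONS. Fix $f\in V(0,1)$ and put $c_k=h_k C_{n_k}(f)$. By the previous display,
\begin{equation*}
	\sum_{k=1}^{\infty}c_k^2\,\log^2 k=\sum_{k=1}^{\infty}h_k^2\,C_{n_k}^2(f)\,\log^2 k<+\infty ,
\end{equation*}
so the coefficient sequence $(c_k)$ satisfies the hypothesis of the Menshov--Rademacher theorem (Theorem~\ref{theorem1.1}) relative to the ONS $(\varphi_{n_k})$. Applying that theorem yields that
\begin{equation*}
	\sum_{k=1}^{\infty}c_k\,\varphi_{n_k}(x)=\sum_{k=1}^{\infty}h_k\,C_{n_k}(f)\,\varphi_{n_k}(x)
\end{equation*}
converges a.e.\ on $[0,1]$. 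Since $f\in V(0,1)$ was arbitrary and the subsequence did not depend on it, this is exactly the assertion of Theorem~\ref{theorem3.5}.

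There is essentially no obstacle here: all of the analytic work --- the verification, via Lemma~\ref{lemma1.1}, Lemma~\ref{lemma1.2} and the auxiliary estimates \eqref{nt}, \eqref{nt111}, \eqref{al22}, of a bound of the form $G_m=O(1)T_m$ for the extracted subsystem together with the resulting summability --- is already carried out in the proof of Theorem~\ref{theorem3.4}; this plays the role of the ``Theorem~\ref{theorem3.1}'' step, now applied to the subsystem $(\varphi_{n_k})$ with the multiplier sequence $(h_k)$ in place of $(d_n)$. The only thing one must be slightly attentive to is the bookkeeping of indices: in $\sum_{k}h_k^2 C_{n_k}^2(f)\log^2 k$ the logarithmic weight involves the running index $k$ of the subsequence, which is precisely the weight $\log^2 k$ demanded by Menshov--Rademacher for the series $\sum_{k}c_k\varphi_{n_k}$, so the two statements match with no loss.
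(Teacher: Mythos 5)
Your argument is correct and is essentially the paper's own: extract the subsequence via Theorem~\ref{theorem3.4}, note that $(\varphi_{n_k})$ is again an ONS, and apply the Menshov--Rademacher theorem (Theorem~\ref{theorem1.1}) to the coefficients $c_k=h_kC_{n_k}(f)$. The only discrepancy is that the paper cites Theorem~\ref{theorem3.1} rather than Theorem~\ref{theorem1.1} in its one-line justification, which appears to be a slip (compare the analogous deduction in Theorem~\ref{theorem3.}); your reading is the intended one.
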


\end{document}